\newtheorem{thm}{Theorem}[section]
\newtheorem{prop}[thm]{Proposition}
\newtheorem{theorem}[thm]{Theorem}
\newtheorem{proposition}[thm]{Proposition}
\newtheorem{lemma}[thm]{Lemma}
\newtheorem*{theorem*}{Theorem}
\theoremstyle{definition}
\newtheorem{example}[thm]{Example}
\newtheorem*{ack}{Acknowledgement}
\newcommand{\N}{\mathbb{N}} 
\newcommand{\Z}{\mathbb{Z}} 
\newcommand{\C}{\mathbb{C}} 
\newcommand{\Q}{\mathbb{Q}} 
\newcommand{\G}{\Gamma}
\newcommand{\g}{\gamma}
\DeclareMathOperator{\Ima}{Im}
\title{Homology of odometers}
\author{Eduardo Scarparo}
\address{Departamento de Matemática, Universidade Federal de Santa Catarina, 88040-970 Florianópolis-SC, Brazil}
\email{duduscarparo@gmail.com}
\thanks{The author was supported by CNPq, Brazil, 167983/2017-2.}
\begin{document}

\begin{abstract}
We compute the homology groups of transformation groupoids associated with odometers and show that certain $(\Z\rtimes\Z_2)$-odometers give rise to counterexamples to the HK conjecture, which relates the homology of an essentially principal, minimal, ample groupoid $G$ with the K-theory of $C^*_r(G)$. We also show that transformation grupoids of odometers satisfy the AH conjecture.

\end{abstract}

\maketitle

\section{Introduction}

Given a decreasing sequence $(\G_i)_{i\in\N}$ of finite index subgroups of a group $\G$, there is an action of $\G$ on $\varprojlim\G/\G_i$ given by left-multiplication. This action is called an odometer, and it has been extensively studied.

For example, in \cite{MR2550195}, Orfanos obtained several properties of crossed products associated to odometers, in the case that the acting group is amenable. In \cite{MR2783933}, Ioana studied these actions in the measurable setting, showing orbit equivalence superrigidity when the acting group has property (T), and, in \cite{MR3556453}, Cortez and Medynets studied continuous orbit equivalence and topological full groups of odometers.

In \cite{MR3552533}, Matui formulated two conjectures about homology groups and K-theory of second countable, étale, essentially principal, minimal groupoids with unit space homeomorphic to the Cantor set. The first one (HK conjecture) predicts that, given such a groupoid $G$, the following holds:

\begin{equation*}
K_*(C^*_r(G))\simeq\bigoplus_{k\geq 0} H_{2k+*}(G) \text{ for $*=0,1$.}
\end{equation*}
The HK conjecture has been verified for several classes of groupoids (see \cite{ortega2018homology} and \cite{farsi2018ample} for recent developments). 

The second conjecture (AH conjecture) relates the abelianization of the topological full group of $G$ with the two first homology groups of $G$. It has been verified for principal, almost finite groupoids, and groupoids arising from products of one-sided shifts of finite type (\cite{MR3552533}). 

In this note, we show that certain $(\Z\rtimes\Z_2)$-odometers are counterexamples to the HK conjecture. The particular class of  $(\Z\rtimes\Z_2)$-odometers that we consider has already appeared elsewhere (e.g., in \cite[Section 2.4]{MR3779956}, \cite[Example 7.5]{MR3506971} and \cite[Example 4.3]{scarparo2017c}). Moreover, in \cite[10.11.5(c)]{MR859867}, using a different picture, Blackadar computed the K-theory of the crossed products associated to these odometers, and, in \cite{MR962104}, Kumjian showed that these crossed products are AF (approximately finite-dimensional). 

We also show that the AH conjecture holds for transformation groupoids associated to odometers.

The paper is organized as follows. In Section \ref{pre}, we collect basic facts about odometers, and characterize when an odometer is topologically free. We also describe the K-theory and homology of odometers, and we recall the the definitions of ample groupoids and their homology groups.

In Section \ref{count}, we compute the K-theory and homology of certain $(\Z\rtimes\Z_2)$-odometers and show that they are counterexamples to the HK conjecture. In Section \ref{ah}, we show that transformation groupoids of odometers satisfy the AH conjecture.

\begin{ack}

The author thanks Kevin Aguyar Brix for inspiring conversations about homology of groupoids.

\end{ack}

\section{Preliminaries}\label{pre}

\subsection{Odometers}

Let $\G$ be a group and $(\Gamma_i)_{i\in\N}$ a sequence of finite index subgroups of $\Gamma$ such that, for every $i\in \N$, $\Gamma_i \gneq \Gamma_{i+1}$. 

For each $i\in\mathbb{N}$, let $p_i\colon \G /\Gamma_{i+1}\to\Gamma/\Gamma_{i}$ be the surjection given by 
\begin{equation}
p_i(\gamma\Gamma_{i+1}):=\gamma\Gamma_i,\text{ for } \gamma\in\Gamma.\label{proje}
\end{equation} 
Let $X:=\varprojlim(\G/\G_i,p_i)=\{(x_i)\in\prod\Gamma/\Gamma_i:p_i(x_{i+1})=x_i,\forall i\in\mathbb{N}\}$. Then $X$ is homeomorphic to the Cantor set and $\Gamma$ acts in a minimal way on $X$ by $\gamma(x_i):=(\gamma x_i)$, for $\g\in\G$ and $(x_i)\in X$. This action is called an \emph{odometer} (terminology from \cite{MR3556453}). See \cite[Fait 2.1.4]{MR3289281} and \cite[Proposition A.1]{MR3506971} for more abstract characterizations of odometers.

Note that, if $(x_i),(y_i)\in X$, and there exists $i_0$ such that $x_{i_0}=y_{i_0}$, then, for $1\leq i\leq i_0$, it holds that $x_i=y_i$.

Given $j\geq 1$ and $g\G_j\in \G/\G_j$, let $U(j,g\G_j):=\{(x_i)\in X:x_j=g\G_j\}$. Then $\{U(j,g\G_j):j\in\mathbb{N},g\G_j\in\G/\G_j\}$ is a basis for $X$ consisting of compact-open sets.

Recall that an action of a group $\G$ on a locally compact Hausdorff space $Y$ is said to be \emph{topologically free} if, for each $\g\in\G\setminus\{e\}$, the set of points of $Y$ fixed by $\g$ has empty interior.

\begin{prop} \label{topfree}
An odometer $\G\curvearrowright X:=\varprojlim\G/\G_i$ is topologically free if and only if, for every $\gamma\in\cap\Gamma_i\setminus\{e\}$ and $j\geq 1$, there exists $b\in \Gamma_j$ such that $b^{-1}\gamma b\notin\cap\Gamma_i$.

\end{prop}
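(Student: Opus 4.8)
The plan is to establish the contrapositive equivalence: the odometer \emph{fails} to be topologically free if and only if there exist $\g\in\cap\G_i\setminus\{e\}$ and $j\geq 1$ such that $b^{-1}\g b\in\cap\G_i$ for every $b\in\G_j$. Negating the right-hand side then recovers the condition in the proposition.

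First I would compute the fixed-point set of a single element. Writing a coordinate $x_i\in\G/\G_i$ as $x_i=g_i\G_i$, the definition $\g(x_i)=(\g x_i)$ shows that $(x_i)$ is fixed by $\g$ precisely when $\g g_i\G_i=g_i\G_i$ for all $i$, i.e.\ $g_i^{-1}\g g_i\in\G_i$ for all $i$; since $\G_i$ is a subgroup this is independent of the chosen representatives. Next I would record two equivariance facts that let me normalize the basic open set under consideration: as the action is by homeomorphisms, $g\cdot U(j,\G_j)=U(j,g\G_j)$ and $g\cdot\mathrm{Fix}(\g)=\mathrm{Fix}(g\g g^{-1})$. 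Consequently $U(j,g\G_j)\subseteq\mathrm{Fix}(\g)$ if and only if $U(j,\G_j)\subseteq\mathrm{Fix}(g^{-1}\g g)$, so it suffices to understand containments of the standard neighbourhood $U(j,\G_j)$ of the base point in a fixed-point set.

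The technical core is the claim that, for any $\delta\in\G$ and $j\geq 1$,
$$U(j,\G_j)\subseteq\mathrm{Fix}(\delta)\iff b^{-1}\delta b\in\cap\G_i\text{ for all }b\in\G_j.$$
To prove it I would use that the points of $U(j,\G_j)$ are exactly the compatible sequences whose $j$-th coordinate is $\G_j$, and that for such a point every coordinate of index $\geq j$ can be represented by an element of $\G_j$. For the forward direction, for each $i\geq j$ and each $b\in\G_j$ the surjectivity of the projections $p_i$ produces a point of $U(j,\G_j)$ whose $i$-th coordinate is $b\G_i$ (note $b\G_j=\G_j$ since $b\in\G_j$); the fixed-point criterion at that coordinate forces $b^{-1}\delta b\in\G_i$, and letting $i$ vary yields $b^{-1}\delta b\in\cap_{i\geq j}\G_i=\cap\G_i$. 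The converse is a direct substitution into the fixed-point criterion, with coordinates $i<j$ handled by taking $b=e$. This realization step—manufacturing actual points of the inverse limit with prescribed coordinates out of the finite datum $b\in\G_j$—is where the argument must be done with care, and I expect it to be the main obstacle.

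Finally I would assemble the equivalence. The action is not topologically free exactly when some $\mathrm{Fix}(\g)$ with $\g\neq e$ has nonempty interior, i.e.\ contains a basic set $U(j,g\G_j)$; by the normalization this is equivalent to $U(j,\G_j)\subseteq\mathrm{Fix}(\delta)$ for $\delta:=g^{-1}\g g\neq e$ and some $j$. Applying the core claim—and observing that taking $b=e$ already forces $\delta\in\cap\G_i$—shows this amounts to the existence of $\delta\in\cap\G_i\setminus\{e\}$ and $j\geq 1$ with $b^{-1}\delta b\in\cap\G_i$ for all $b\in\G_j$. Negating this last statement gives precisely the stated criterion, completing the proof.
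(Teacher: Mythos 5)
Your proposal is correct and takes essentially the same route as the paper's proof: translating by $g$ to reduce to the base neighbourhood $U(j,\G_j)$, testing $\delta$ on points of the form $(b\G_k)_k$ with $b\in\G_j$ for one direction, and for the other using that any point of $U(j,\G_j)$ has coordinates of index $\geq j$ representable in $\G_j$ while the lower coordinates are forced. The only cosmetic difference is that you package the two implications into a single fixed-point-set equivalence (and invoke surjectivity of the projections where the paper simply exhibits the explicit point $(b\G_k)_k$), which does not change the substance of the argument.
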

\begin{proof}
Suppose that the $\G$-action is not topologically free. This implies that there exists $\gamma'\in\Gamma\setminus\{e\}$ and $V\subset X$ non-empty and open such that each $x\in V$ is fixed by $\gamma'$. Since the family $\{U(j,g\G_j):j\in\mathbb{N},g\G_j\in\G/\G_j\}$ is an open basis for $X$, there exists $j\in\mathbb{N}$ and $g\G_j\in\Gamma/\G_j$ such that $U(j,g\G_j)$ is fixed pointwise by $\gamma'$. Since $U(j,g\G_j)=gU(j,\G_j)$, it follows that $\g:=g^{-1}\g' g$ fixes pointwise $U(j,\G_j)$.

Given $b\in\Gamma_j$, we have $(b\G_i)_{i\in\N}\in U(j,\G_j)$, hence $(\gamma b\Gamma_i)=(b\Gamma_i)$. Therefore, $b^{-1}\g b\in\cap\G_i$. This concludes the backwards implication.

For the converse, assume that there exists $\gamma\in\cap\Gamma_i\setminus\{e\}$ and $j\geq 1$ such that, for every $b\in \Gamma_j$, $b^{-1}\gamma b\in\cap\Gamma_i$. 

Given $(g_i\G_i)\in U(j,\G_j)$, by the definition of $X$, we have that, for $i\geq j$, $g_i\in\G_j$. Therefore, for $i\geq j$, it holds that $\g g_i\G_i=g_i\G_i$, and this implies that $\gamma g_i\G_i =g_i\G_i$ for every $i$.

 Hence, $\g(g_i\G_i)=(g_i\G_i)$ for any $(g_i\G_i)\in U(j,\G_j)$, and the action is not topologically free.
\end{proof}

\begin{example}\label{ce}

Recall that the \emph{infinite dihedral group} is the semidirect product $\Z\rtimes\Z_2$ associated to the action of $\Z_2$ on $\Z$ by multiplication by $-1$. 

Let $(n_i)$ be a strictly increasing sequence of natural numbers such that $n_i|n_{i+1}$, for every $i\in\mathbb{N}$. Define $\Gamma:=\mathbb{Z}\rtimes\mathbb{Z}_2$ and, for $i\geq 1$, $\Gamma_i:=n_i\mathbb{Z}\rtimes\mathbb{Z}_2$. Then $\cap\G_i=\{(0,0),(0,1)\}$. Moreover, for $j\geq 1$, we have that $(n_j,0)(0,1)(-n_j,0)=(2n_j,1)\notin\cap\G_i$.  Therefore, $\G\curvearrowright\varprojlim\G/\G_i$ is topologically free.

\end{example}

Given $\G\curvearrowright\varprojlim\G/\G_i$ an odometer such that $\G_i\unlhd\G$ for every $i$, the $\G$-action is free if and only if $\cap\G_i=\{e\}$ (see \cite[Section 2.1]{MR2427048}). Furthermore, if the $\G$-action is topologically free, then Proposition \ref{topfree} implies that $\cap\G_i=\{e\}$.

Recall that an action of a group $\G$ on a set $X$ is said to be $\emph{faithful}$ if, for every $\gamma\in\G\setminus\{e\}$, there exists $x\in X$ such that $\gamma x\neq x$. The group $\G$ is said to be \emph{residually finite} if, for every $\gamma\in\G\setminus\{e\}$, there exists a finite group $F$ and a homomorphism $\varphi\colon \G\to F$ such that $\varphi(\gamma)\neq e$.

 In case there exists a faithful $\G$-odometer $\G\curvearrowright\varprojlim\G/\G_i$, then, given $\g\in\G\setminus\{e\}$, there is $j\geq 1$ such that $\g$ acts non-trivially on $\G/\G_j$. In particular, $\G$ is residually finite (this remark is from \cite{MR3672904}). Conversely, if $\G$ is countably infinite and residually finite, there exists a strictly decreasing sequence $(\G_i)$ of finite index normal subgroups of $\G$ such that $\cap\G_i=\{e\}$, and the odometer $\G\curvearrowright\varprojlim\G/\G_i$ is free.

\subsection{K-theory of odometers} Given a group $\G$ acting on a compact Hausdorff space $X$, we will denote the canonical copy of $\G$ in either $C(X)\rtimes_r \G$ or $C(X)\rtimes \G$ by $(\delta_g)_{g\in \G}$. 

The next result is an easy consequence of \cite[Corollary 2.10]{MR568977}.

\begin{prop}\label{indkt}

Let $\Lambda$ be a finite index subgroup of a group $\G$. Then $$C(\Gamma/\Lambda)\rtimes_r \Gamma\simeq M_{\Gamma/\Lambda}(\C)\otimes C^*_r(\Lambda).$$
\end{prop}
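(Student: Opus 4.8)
The plan is to produce, inside the reduced crossed product $A:=C(\G/\Lambda)\rtimes_r\G$, a full system of matrix units indexed by the finite set $\G/\Lambda$ whose diagonal corner is a copy of $C^*_r(\Lambda)$; the isomorphism then follows from the standard fact that a unital C*-algebra containing matrix units $(v_{x,y})$ with $\sum_x v_{x,x}=1$ splits as $M_{\G/\Lambda}(\C)\otimes v_{x_0,x_0}Av_{x_0,x_0}$. First I would fix coset representatives, i.e.\ a section $x\mapsto g_x$ of the quotient $\G\to\G/\Lambda$ with $g_{e\Lambda}=e$, so that $g_x\Lambda=x$. Writing $p_x:=\mathbf 1_{\{x\}}\in C(\G/\Lambda)$ for the minimal projections and $(\delta_g)_{g\in\G}$ for the canonical unitaries, I would set $v_{x,y}:=p_x\,\delta_{g_xg_y^{-1}}\,p_y$.

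A short computation using the covariance relation $\delta_g p_y=p_{g\cdot y}\delta_g$ together with $g_xg_y^{-1}\cdot y=x$ gives $v_{x,y}=\delta_{g_xg_y^{-1}}p_y=p_x\delta_{g_xg_y^{-1}}$, from which the relations $v_{x,y}v_{z,w}=\delta_{y,z}\,v_{x,w}$ and $v_{x,y}^*=v_{y,x}$ follow immediately; since the $p_x$ sum to $1$, this is a full system of $|\G/\Lambda|\times|\G/\Lambda|$ matrix units. It then remains to identify the corner $p_0Ap_0$ at $p_0:=p_{e\Lambda}$ with $C^*_r(\Lambda)$. The spanning elements $p_0\delta_g p_0$ are nonzero precisely when $g\cdot e\Lambda=e\Lambda$, that is $g\in\Lambda$, and in that case equal $\delta_g p_0$; moreover $\lambda\mapsto\delta_\lambda p_0$ is a group homomorphism of $\Lambda$ into the unitaries of $p_0Ap_0$, since $p_0\delta_\lambda=\delta_\lambda p_0$ for $\lambda\in\Lambda$. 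This yields a surjective $*$-homomorphism $C^*_r(\Lambda)\to p_0Ap_0$.

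The one point requiring care — and the step I expect to be the real content — is to verify that this last map is isometric, i.e.\ that the corner is the \emph{reduced} and not the full group C*-algebra of $\Lambda$. For this I would use the canonical faithful conditional expectation $E\colon A\to C(\G/\Lambda)$, whose compression $p_0E(\cdot)p_0$ is a faithful conditional expectation from $p_0Ap_0$ onto $\C p_0$ intertwining the canonical faithful expectation $C^*_r(\Lambda)\to\C$; faithfulness on both sides forces injectivity, hence an isomorphism onto the corner. This is exactly the content encapsulated by \cite[Corollary 2.10]{MR568977}, so in the write-up I would simply invoke that corollary for the corner identification and then assemble the matrix-unit decomposition to conclude $A\simeq M_{\G/\Lambda}(\C)\otimes C^*_r(\Lambda)$.
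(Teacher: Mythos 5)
Your argument is correct, but it takes a genuinely different route from the paper's. The paper works from the full crossed product downwards: fixing coset representatives $g_1,\dots,g_n$ and writing $gg_i=g_{\sigma_g(i)}h(i,g)$ with $h(i,g)\in\Lambda$, it defines an explicit surjective $*$-homomorphism $\psi\colon C(\G/\Lambda)\rtimes\G\to M_{\G/\Lambda}(\C)\otimes C^*_r(\Lambda)$ by $1_{\{l\}}\mapsto e_{l,l}\otimes 1$ and $\delta_g\mapsto\sum_i e_{\sigma_g(i),i}\otimes\delta_{h(i,g)}$, and then observes that composing $\psi$ with the faithful expectation $\varphi\otimes\tau$ recovers the canonical expectation of the crossed product onto $C(\G/\Lambda)$, so that $\psi$ descends to an isomorphism on the reduced crossed product. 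You instead decompose $A:=C(\G/\Lambda)\rtimes_r\G$ internally, via the matrix units $v_{x,y}=p_x\delta_{g_xg_y^{-1}}p_y$ and the corner $p_0Ap_0$. The two proofs share their only nontrivial ingredient — a faithful conditional expectation settles the full-versus-reduced question — but you localize that issue to the copy of the subgroup algebra sitting in a corner, while the paper handles it globally in one stroke. What your route buys: the appearance of $C^*_r(\Lambda)$ is structurally explained (it is a full corner of the crossed product, which is the Morita-equivalence picture behind Green's theorem), and you never need to check that an explicit formula defines a covariant representation. What the paper's route buys: brevity, and an explicit formula whose combinatorial data $\sigma_g$, $h(i,g)$ reappears verbatim in the description \eqref{homo1} of the transfer map, so the K-theoretic and homological computations run in parallel.

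One point to tighten in your write-up: the unitary representation $\lambda\mapsto\delta_\lambda p_0$ gives, a priori, a surjection $C^*(\Lambda)\to p_0Ap_0$ from the \emph{full} group C*-algebra, so there is no map on $C^*_r(\Lambda)$ yet whose isometry you could check. The expectation argument you sketch should be run as showing that the kernel of this surjection coincides with the kernel of the regular representation: since $p_0E(\cdot)p_0$ composed with the surjection agrees with the canonical trace of $C^*_r(\Lambda)$ (composed with the quotient map) on the dense span of the $\delta_\lambda p_0$, positivity together with faithfulness of $p_0E(\cdot)p_0$ on one side and of the trace on the other gives the two inclusions between the kernels. This simultaneously yields the factorization through $C^*_r(\Lambda)$ and its injectivity, which is indeed exactly what \cite[Corollary 2.10]{MR568977} encapsulates.
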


\begin{proof}
Take representatives $g_1,\dots,g_n$ for $\Gamma/\Lambda$. For $1\leq i \leq n$ and $g\in\G$, let $h(i,g)\in\Lambda$ and $\sigma_g(i)\in\{1,\dots,n\}$ be such that 
\begin{equation*}
gg_i=g_{\sigma_g(i)}h(i,g).
\end{equation*}
 Then there is a surjective $*$-homomorphism $\psi\colon C(\Gamma/\Lambda)\rtimes \Gamma\to M_{\Gamma/\Lambda}(\C)\otimes C^*_r(\Lambda)$ such that 
\begin{align*}
\psi(1_{\{l\}})=e_{l,l}\otimes 1 \text{ and } \psi(\delta_g)=\sum e_{\sigma_g(i),i}\otimes \delta_{h(i,g)},\text{ for $l\in\G/\Lambda$ and $g\in\G$.} 
\end{align*}

Let $\tau$ be the canonical faithful tracial state on $C^*_r(\Lambda)$ and $\varphi\colon M_{\Gamma/\Lambda}(\C)\to C(\G/\Lambda)$ be the canonical conditional expectation. Then $\varphi\otimes\tau\colon M_{\Gamma/\Lambda}(\C)\otimes C^*_r(\Lambda)\to C(\G/\Lambda)$ is a faithful conditional expectation such that $(\varphi\otimes\tau)\circ\psi$ is the canonical conditional expectation from $C(\Gamma/\Lambda)\rtimes \Gamma$ onto $C(\G/\Lambda)$. In particular, $\psi$ factors through an isomorphism from $C(\Gamma/\Lambda)\rtimes_r \Gamma$ into $M_{\Gamma/\Lambda}(\C)\otimes C^*_r(\Lambda)$.

\end{proof}

Hence, given an odometer $\G\curvearrowright X=\varprojlim\G/\G_i$, we have that $C(X)\rtimes_r \Gamma\simeq \varinjlim C(\G/\G_i)\rtimes_r\G\simeq\varinjlim M_{\Gamma/\Gamma_i}(\C)\otimes C^*_r(\Gamma_i)$ (this was observed already in \cite{MR2772352}). Therefore,
\begin{equation}
 K_*(C(X)\rtimes_r \Gamma)\simeq\varinjlim K_*(C^*_r(\Gamma_i)).\label{kt}
\end{equation}

\subsection{Homology of odometers} 

Given a group $\G$, we denote by $\G'$ its \emph{commutator subgroup}. Recall that $\G_{\mathrm{ab}}:=\frac{\G}{\G'}$ is the \emph{abelianization} of $\G$.

Let $\Lambda$ be a finite index subgroup of a group $\G$. Let us recall the definition of the \emph{transfer map} $\mathrm{tr}_\Lambda^\G\colon H_*(\G)\to H_*(\Lambda)$ (see, e.g., \cite[Section III.9]{MR1324339}).

Consider $\Z$ as a trivial $\Lambda$-module and $\Z^{\G/\Lambda}$ as a permutation $\G$-module. Let $i\colon \Lambda\to \G$ be the inclusion and $\varphi\colon\Z\to\Z^{\G/\Lambda}$ be the additive map given by $\varphi(1):=\delta_\Lambda$. Then functoriality of $H_*$ gives a homomorphism $(i,\varphi)_*\colon H_*(\Lambda)\to H_*(\G,\Z^{\G/\Lambda})$, which, by Shapiro's lemma, is an isomorphism. 

Also let $\psi\colon\Z\to\Z^{\G/\Lambda}$ be the additive map given by 
\begin{equation*}
\psi(1):=\sum_{x\in\G/\Lambda}\delta_x.
\end{equation*} 
Note that $\psi$ is $\G$-equivariant, hence it induces a homomorphism $H_*(\psi)\colon H_*(\G)\to H_*(\G,\Z^{\G/\Lambda})$. Define $\mathrm{tr}_\Lambda^\G:=(i,\varphi)_*^{-1}\circ H_*(\psi)\colon H_*(\G)\to H_*(\Lambda)$.

For $*=0$, $\mathrm{tr}_\Lambda^\G\colon \Z\to\Z$ is multiplication by $[\G:\Lambda]$.

Take representatives $g_1,\dots,g_n$ for $\G/\Lambda$. For $1\leq i \leq n$ and $g\in\G$, let $h(i,g)\in\Lambda$ be such that $gg_i=g_{\sigma_g(i)}h(i,g)$ for some $\sigma_g(i)\in\{1,\dots,n\}$. Then, for $*=1$, $\mathrm{tr}_\Lambda^\G\colon \G_\mathrm{ab}\to \Lambda_\mathrm{ab}$ is given by
\begin{equation}
\mathrm{tr}_\Lambda^\G(g\G' )=\sum_{i=1}^n h(i,g)\Lambda'.\label{homo1}
\end{equation}

\begin{proposition}\label{transfer}
Let $\G\curvearrowright X=\varprojlim\G/\G_i$ be an odometer. Then

\begin{align*}
H_*(\Gamma,C(X,\mathbb{Z}))\simeq\varinjlim (H_*(\Gamma_i),\mathrm{tr}_{\G_{i+1}}^{\G_i}).
\end{align*}

\end{proposition}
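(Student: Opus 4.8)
The plan is to exhibit $C(X,\Z)$ as a direct limit of permutation $\G$-modules, to commute group homology past the limit, to identify each term by Shapiro's lemma, and finally to check that the resulting connecting maps are precisely the transfer maps $\mathrm{tr}_{\G_{i+1}}^{\G_i}$.

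First I would argue that $C(X,\Z)\cong\varinjlim(\Z^{\G/\G_i},p_i^*)$ as $\G$-modules, where $\Z^{\G/\G_i}$ carries the permutation action and $p_i^*$ denotes pullback along $p_i$. Indeed, any $f\in C(X,\Z)$ is locally constant; by compactness of $X$ and the fact that $\{U(j,g\G_j)\}$ is a basis, $f$ is constant on the sets $U(j,g\G_j)$ for some fixed $j$, hence factors through the canonical projection $X\to\G/\G_j$. The connecting map $\Z^{\G/\G_i}\to\Z^{\G/\G_{i+1}}$ is then exactly $p_i^*$, and all maps are $\G$-equivariant. Next, since $H_*(\G,-)$ is computed as the homology of $P_\bullet\otimes_{\Z[\G]}(-)$ for a projective resolution $P_\bullet$ of $\Z$, and both tensor product and homology commute with filtered colimits, I obtain $H_*(\G,C(X,\Z))\cong\varinjlim H_*(\G,\Z^{\G/\G_i})$ with connecting maps $H_*(\G,p_i^*)$. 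As $[\G:\G_i]<\infty$, we have $\Z^{\G/\G_i}=\mathrm{Ind}_{\G_i}^\G\Z$, so Shapiro's lemma yields $H_*(\G,\Z^{\G/\G_i})\cong H_*(\G_i)$; this isomorphism is the map $(i,\varphi)_*$ from the transfer definition recalled above.

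It remains to identify each connecting map $H_*(\G,p_i^*)$ with $\mathrm{tr}_{\G_{i+1}}^{\G_i}$, and this is where the work lies. Using transitivity of induction, $\Z^{\G/\G_{i+1}}=\mathrm{Ind}_{\G_{i+1}}^\G\Z=\mathrm{Ind}_{\G_i}^\G(\Z^{\G_i/\G_{i+1}})$; letting $\psi_i\colon\Z\to\Z^{\G_i/\G_{i+1}}$ be the $\G_i$-equivariant coset-sum map $\psi_i(1)=\sum_{x\in\G_i/\G_{i+1}}\delta_x$ (the ``$\psi$'' of the transfer for the pair $\G_{i+1}\le\G_i$), a direct computation on the basis elements $\delta_{g\G_i}$ shows $p_i^*=\mathrm{Ind}_{\G_i}^\G(\psi_i)$, both sending $\delta_{g\G_i}$ to $\sum_{h\G_{i+1}\in\G_i/\G_{i+1}}\delta_{gh\G_{i+1}}$. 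Then naturality of the Shapiro isomorphism in the coefficient module, together with the transitivity of Shapiro isomorphisms under the tower $\G_{i+1}\le\G_i\le\G$, identifies $H_*(\G,p_i^*)$ (conjugated by the isomorphisms of the previous paragraph) with the composite of $H_*(\psi_i)\colon H_*(\G_i)\to H_*(\G_i,\Z^{\G_i/\G_{i+1}})$ and the Shapiro isomorphism $H_*(\G_i,\Z^{\G_i/\G_{i+1}})\cong H_*(\G_{i+1})$. By the definition recalled in the text this composite is exactly $\mathrm{tr}_{\G_{i+1}}^{\G_i}$, which finishes the argument. I expect the main obstacle to be the bookkeeping in this last step: one must match the concrete Shapiro map $(i,\varphi)_*$ used in the paper's transfer definition with the abstract induction-adjunction isomorphism, and verify the transitivity of Shapiro isomorphisms along $\G_{i+1}\le\G_i\le\G$, so that the naturality square applies verbatim.
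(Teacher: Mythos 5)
Your proof is correct and takes essentially the same route as the paper: realize $C(X,\Z)\simeq\varinjlim(\Z^{\G/\G_i},p_i^*)$ as $\G$-modules, commute homology with the direct limit, apply Shapiro's lemma termwise, and identify the connecting maps with the transfer maps via the coset-sum map $\psi_i$. Your final step---factoring $p_i^*$ as $\mathrm{Ind}_{\G_i}^{\G}(\psi_i)$ and invoking naturality and transitivity of the Shapiro isomorphisms---is precisely the content of the paper's ``one can readily check'' verification that $\varphi_{i+1}\circ\mathrm{tr}_{\G_{i+1}}^{\G_i}=(j,\theta)_*\circ H_*(\psi)=H_*(q_i)\circ\varphi_i$.
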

\begin{proof}
For $i\geq 1$, let $p_i\colon\G/\G_{i+1}\to\G/\G_i$ be as in \eqref{proje} and $q_i\colon\Z^{\G/\G_i}\to\Z^{\G/\G_{i+1}}$ be given by $q_i(f):=f\circ p_i$, for $f\colon\G/\G_i\to\Z$. Note that $C(X,\Z)$ and $\varinjlim (\Z^{\G/\G_i},q_i)$ are isomorphic as $\G$-modules. Since homology commutes with direct limits, we obtain that $$H_*(\Gamma,C(X,\mathbb{Z}))\simeq\varinjlim (H_*(\Gamma,\mathbb{Z}^{\Gamma/\Gamma_i}),H_*(q_i)).$$

Furthermore, for every $i\geq 1$, Shapiro's lemma gives an isomorphism $$\varphi_i\colon H_*(\G_i)\to H_*(\G,\Z^{\G/\G_i}).$$ We claim that $ H_*(q_i)\circ\varphi_i=\varphi_{i+1}\circ\mathrm{tr}_{\G_{i+1}}^{\G_i}$.

Indeed, let $j\colon\G_i\to\G$ be the inclusion, $\psi\colon\Z\to\Z^{\G_i/\G_{i+1}}$ be the additive map given by $\psi(1):=\sum_{x\in\G_i/\G_{i+1}}\delta_x$ and $\theta\colon\Z^{\G_i/\G_{i+1}}\to\Z^{\G/\G_{i+1}}$ be the additive map given by $\theta(\delta_x)=\delta_x$, for $x\in\G_i/\G_{i+1}$.

One can readily check that $\varphi_{i+1}\circ\mathrm{tr}_{\G_{i+1}}^{\G_i}=(j,\theta)_*\circ H_*(\psi)=H_*(q_i)\circ\varphi_i$. This concludes the proof of the proposition.
\end{proof}

In particular, for an odometer $\G\curvearrowright X=\varprojlim\G/\G_i$, we have that
\begin{equation}
H_0(\G,C(X,\Z))\simeq\left\{\frac{m}{[\G:\G_i]}\in\Q:m\in\Z,i\geq 1\right\}.\label{homo0}
\end{equation}

\subsection{Ample groupoids}

A topological groupoid $G$ is said to be \emph{ample} if $G$ is locally compact, Hausdorff, \'etale 
(in the sense that the range and source maps $r,s\colon G\to G$ are local homeomorphisms onto the unit space $G^{(0)}$) 
and the unit space $G^{(0)}$ is totally disconnected.

Let $G$ be an ample groupoid. The \emph{orbit} of an $x\in G^{(0)}$ is the set $r(s^{-1}(x))$, and the groupoid is said to be \emph{minimal} if the orbit of each point of $G^{(0)}$ is dense in $G^{(0)}$.

A \emph{bisection} is a subset $S\subset G$ such that $r|_S$ and $s|_S$ are injective. Note that, if $S$ is open, then $r|_S$ and $s|_S$ are homeomorphisms onto their images. 

Now assume that $G$ is an ample groupoid with compact unit space. The \emph{topological full group} $[[G]]$ is the group of compact-open bisections $U$ such that $r(U)=s(U)=G^{(0)}$.

There is a homomorphism $\theta$ from $[[G]]$ to the group of homeomorphisms of $G^{(0)}$, 
given by $\theta_U:=r\circ (s|_U)^{-1}$. If $G$ is \emph{essentially principal} (that is, $\mathrm{int}\{g\in G:r(g)=s(g)\}=G^{(0)}$), then  $\theta$ is injective.

\subsection{Homology of ample groupoids}In this sub-section, we recall the definition of the homology groups of an ample groupoid (\cite[Definition 2.3]{MR3552533}, see also the beginning of \cite[Section 4]{farsi2018ample}). 

Given locally compact, Hausdorff spaces $X$ and $Y$, a local homeomorphism $\pi\colon X\to Y$ induces a homomorphism $\pi_*\colon C_c(X,\Z)\to C_c(Y,\Z)$ given by $$\pi_*(f)(y):=\sum_{x\in\pi^{-1}(y)} f(x).$$

Given an ample groupoid $G$, and $n\geq 1$, let $G^{(n)}$ be the space of sequences $(g_1,\dots,g_n)\in G^n$ such that the product $g_1\dots g_n$ is well-defined. The topology in $G^{(n)}$ is the one inherited from the product topology in $G^n$.

For $i=0, \dots,n$, let $d_i\colon G ^{(n)}\to G^{(n-1)}$ be given by

\begin{align*}
d_i(g_1, \dots,g_n):=
\begin{cases}
(g_2,\dots,g_n) & \text{if $i=0$}\\
(g_1,\dots,g_ig_{i+1},\dots,g_n) & \text{if $1\leq i\leq n-1$}\\
(g_1,\dots,g_{n-1}) & \text{if $i=n$}. 
\end{cases}
\end{align*} 

If $n=1$, let $d_0,d_1\colon G^{(1)}\to G^{(0)}$ be the source and range maps, respectively. 

Clearly, the maps $d_i$ are local homeomorphisms. Define $\delta_n\colon C_c(G^{(n)},\Z)\to C_c(G^{(n-1)})$ by $$\delta_n:=\sum_{i=0}^n (-1)^id_{i_*}.$$ Then

$$0\xleftarrow{\delta_0}C_c(G^{(0)},\Z)\xleftarrow{\delta_1} C_c(G^{(1)},\Z)\xleftarrow{\delta_2}
\dots$$
is a chain complex. Denote by $H_n(G):=\frac{\ker\delta_n}{\Ima\delta_{n+1}}$ its homology groups.

\begin{example}\label{tg}
Let $\varphi$ be an action of a group $\Gamma$ on a compact, totally disconnected, Hausdorff space $X$. 
As a space, the \emph{transformation groupoid} $G$ associated with $\varphi$ is $G:=\Gamma\times X$ equipped with the product topology.
The product of two elements $(h,y), (g,x)\in G$ is defined if and only if $y=gx$, in which case $(h,gx)(g,x):=(hg,x)$.
Inversion is given by $(g,x)^{-1}:=(g^{-1},gx)$. The unit space $G^{(0)}$ is naturally identified with $X$ and $G$ is ample. Also, $G$ is essentially principal if and only if $\varphi$ is topologicall free, and it holds that $C^*_r(G)\simeq C(X)\rtimes_r\G$ and $H_*(G)\simeq H_*(\G,C(X,\Z))$.

Given $g\in \G$ and $A\subset X$, notice that $s(\{g\}\times A)=A$, and $r(\{g\}\times A)=gA$.

Let us now describe the topological full group $[[G]]$. Take $g_1,\dots,g_n\in\Gamma$ and $A_1,\dots,A_n\subset X$ clopen sets such that $X = \sqcup_{i=1}^n A_i = \sqcup_{i=1}^n g_iA_i$ (disjoint unions). Then $U:=\cup_{i=1}^n\{g_i\}\times A_i$ is a compact-open bisection and $s(U)=r(U)=X$. Therefore, $U\in[[G]]$. Conversely, it is easy to see that any $U\in[[G]]$ is as above.
\end{example}

\section{Counterexamples to the HK conjecture}\label{count}

In \cite{MR3552533}, Matui conjectured (\emph{HK conjecture}) that, given a second countable, étale, minimal, essentially principal groupoid $G$ with unit space homeomorphic to the Cantor set, the following holds:

\begin{equation*}
K_*(C^*_r(G))\simeq\bigoplus_{k\geq 0} H_{2k+*}(G) \text{ for } *=0,1.
\end{equation*}

As in Example \ref{ce}, let $\G\curvearrowright X=\varprojlim\G/\G_i$, with $\G:=\Z\rtimes\Z_2$ and, for $i\geq 1$, $\G_i:=n_i\Z\rtimes\Z_2$, where $(n_i)$ is a strictly increasing sequence of natural numbers such that, for $i\geq 1$, $n_i|n_{i+1}$. We are going to compute $K_*(C(X)\rtimes\G)$ and $H_*(\G,C(X,\Z))$, and conclude that the transformation groupoids associated to these odometers are counterexamples to the HK conjecture.

Note that $\G/\G_i$ can be identified with the abelian group $\Z_{n_i}=\{0,\dots,n_i-1\}$. The action of $(1,0)\in\G$ on an element $(x_i)\in\varprojlim\Z_{n_i}$ is given by summing $1$ in each entry, and the action of $(0,1)\in\G$ is given by multiplying each entry by $-1$.

Since $\G\simeq\G_i$ for every $i$ and $K_1(C^*(\Z\rtimes\Z_2))= 0$ (\cite[10.11.5(a)]{MR859867}), it follows from \eqref{kt} that $K_1(C(X)\rtimes \G)=0$. In fact, $C(X)\rtimes \G$ is an AF algebra (see \cite{MR1245825} or \cite{MR962104}).

One can also compute $K_0(C(X)\rtimes\G)$ by applying \eqref{kt}, but we will instead make use of the following result of Bratteli, Evans and Kishimoto (\cite[Theorem 4.1]{MR1245825}):

\begin{theorem}\label{fix}
Given an action of $\Z\rtimes\Z_2$ on the Cantor set $X$ such that the restricted $\Z$-action is minimal and $(0,1),(1,1)\in\Z\rtimes\Z_2$ have at most a finite number $m_{(0,1)}$ and $m_{(1,1)}$ of fixed points, with $m_{(0,1)}+m_{(1,1)}>0$, then $K_0(C(X)\rtimes\Z\rtimes\Z_2)$ is isomorphic to $$(1+(0,1)_*)\left(\frac{C(X,\Z)}{(1-(1,0)_*)(C(X,\Z))}\right)\oplus\Z^{m_{(0,1)} + m_{(1,1)}}.$$

\end{theorem}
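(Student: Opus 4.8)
The plan is to exploit the isomorphism $\Z\rtimes\Z_2\simeq\Z_2*\Z_2$: the infinite dihedral group is generated by the two involutions $s:=(0,1)$ and $t:=(1,1)$, and $ts=(1,0)$ generates the normal copy of $\Z$. For an action of a free product on $X$ there is a Pimsner-type six-term exact sequence (the Mayer--Vietoris sequence attached to the Bass--Serre tree of $\Z_2*\Z_2$) relating $K_*(C(X)\rtimes\G)$ to the $K$-theory of the two single-involution crossed products. Since $K_1(C(X))=0$, it reads
\begin{equation*}
0\to K_1(C(X)\rtimes\G)\to K_0(C(X))\xrightarrow{\;\mu\;}K_0(C(X)\rtimes\langle s\rangle)\oplus K_0(C(X)\rtimes\langle t\rangle)\to K_0(C(X)\rtimes\G)\to 0,
\end{equation*}
where $\mu$ is the difference of the maps induced by the inclusions $C(X)\hookrightarrow C(X)\rtimes\langle u\rangle$, and where I have already used that $K_1(C(X)\rtimes\langle u\rangle)=0$ (established below). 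Thus the whole computation reduces to determining the $K$-theory of the two $\Z_2$-crossed products and then identifying $\operatorname{coker}\mu$ with the asserted group, while $K_1(C(X)\rtimes\G)=\ker\mu$, which I expect to vanish.

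For a single involution $u$ on $X$ with finite fixed-point set $F_u$ of cardinality $m_u$, I would compute $K_*(C(X)\rtimes\langle u\rangle)$ from the equivariant decomposition $X=F_u\sqcup(X\setminus F_u)$, giving the extension
\begin{equation*}
0\to C_0(X\setminus F_u)\rtimes\Z_2\to C(X)\rtimes\Z_2\to\textstyle\bigoplus_{p\in F_u}C^*(\Z_2)\to 0.
\end{equation*}
Here $\langle u\rangle$ acts freely on $X\setminus F_u$, so the ideal is Morita equivalent to $C_0((X\setminus F_u)/\Z_2)$ and contributes only in degree $0$, while each fixed point contributes $C^*(\Z_2)\simeq\C^2$, i.e.\ a $\Z^2$. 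The associated six-term sequence then yields $K_1(C(X)\rtimes\langle u\rangle)=0$ and exhibits $K_0(C(X)\rtimes\langle u\rangle)$ as an extension of $\Z^{2m_u}$ by $C_c((X\setminus F_u)/\Z_2,\Z)$. In parallel, using that the restricted $\Z$-action is minimal, the Pimsner--Voiculescu sequence for $C(X)\rtimes\Z$ gives $K_0(C(X)\rtimes\Z)\simeq\frac{C(X,\Z)}{(1-(1,0)_*)C(X,\Z)}$ and $\ker(1-(1,0)_*)=\Z$; this is exactly the group $H_0$ appearing in the statement, and it is the object I must recover from the two involution-level computations.

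Finally I would feed these into the Mayer--Vietoris sequence and compute $\operatorname{coker}\mu$. The key local mechanism is that the inclusion $C(X)\hookrightarrow C(X)\rtimes\langle u\rangle$ sends the class of a small clopen neighbourhood of a fixed point $p$ to the \emph{sum} $[e_+]+[e_-]$ of the two spectral projections of $C^*(\Z_2)$ at $p$; hence the image of $\mu$ meets each fixed-point copy of $\Z^2$ in its diagonal, collapsing it to $\Z$ in the cokernel. This is what reduces the naive count $2(m_{(0,1)}+m_{(1,1)})$ to the $\Z^{m_{(0,1)}+m_{(1,1)}}$ of the statement. The remaining, regular part of $\operatorname{coker}\mu$ must then be matched with $(1+(0,1)_*)H_0$, and this identification — rewriting the combined $\langle s\rangle$- and $\langle t\rangle$-coinvariants of $C(X,\Z)$ in terms of the $\Z=\langle ts\rangle$-coinvariants together with the flip $(0,1)_*$, and verifying that $\mu$ is injective so that $K_1$ vanishes — is the main obstacle. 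I expect the bookkeeping of how the two free quotients $(X\setminus F_u)/\Z_2$ reassemble, via minimality of the $\Z$-action, into the averaged group $(1+(0,1)_*)\bigl(\tfrac{C(X,\Z)}{(1-(1,0)_*)C(X,\Z)}\bigr)$ to be the most delicate step.
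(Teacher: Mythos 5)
The paper itself contains no proof of this statement: Theorem \ref{fix} is quoted from Bratteli--Evans--Kishimoto \cite[Theorem 4.1]{MR1245825}, amended by Thomsen's observation \cite{MR2586354} that the hypothesis $m_{(0,1)}+m_{(1,1)}>0$ cannot be dropped. So your proposal can only be measured against that literature, not against an internal argument. Your skeleton is, as far as it goes, sound: the Pimsner (Bass--Serre/Mayer--Vietoris) sequence for $\Z_2*\Z_2$ acting on $C(X)$ is correctly stated; the computation of $K_*(C(X)\rtimes\langle u\rangle)$ from the invariant decomposition $X=F_u\sqcup(X\setminus F_u)$ is correct (the action on the complement is free and proper, so Green's imprimitivity theorem applies, $K_1$ vanishes, and the extension splits because $\Z^{2m_u}$ is free); and the projection of the image of $\mu$ onto each fixed-point summand $\Z^2$ is indeed the diagonal. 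Note also that since the statement concerns only $K_0$, you never need $\ker\mu=0$: exactness alone gives $K_0(C(X)\rtimes\G)\simeq\operatorname{coker}\mu$.

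The genuine gap is exactly the step you postpone. Identifying $\operatorname{coker}\mu$ with $(1+(0,1)_*)\bigl(C(X,\Z)/(1-(1,0)_*)C(X,\Z)\bigr)\oplus\Z^{m_{(0,1)}+m_{(1,1)}}$ is not bookkeeping; it is the entire content of the theorem, and there is a structural reason it cannot be as routine as you hope. None of the steps you actually carry out uses either the hypothesis $m_{(0,1)}+m_{(1,1)}>0$ or, in your main line of argument, minimality of the restricted $\Z$-action (minimality enters only your side computation of $K_*(C(X)\rtimes\Z)$, which the Mayer--Vietoris argument never touches). The six-term sequence, the two $\Z_2$-crossed-product computations, and the diagonal observation all remain valid verbatim when both involutions act freely --- yet in that case the asserted formula is false, which is precisely Thomsen's counterexample recorded in the paper immediately after the statement. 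Hence whatever completes your outline must use the existence of at least one fixed point (and minimality) in an essential way. Moreover, the deferred step must contend with the fact that the image of $\mu$ does not respect the non-canonical splittings $K_0(C(X)\rtimes\langle u\rangle)\simeq C_c((X\setminus F_u)/\Z_2,\Z)\oplus\Z^{2m_u}$, so the cokernel cannot be computed summand by summand, as your sketch implicitly assumes. Until that identification is supplied, what you have is a plausible strategy, not a proof.
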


Actually, the hypothesis that $(0,1)$ or $(1,1)$ must have at least one fixed point does not appear in \cite[Theorem 4.1]{MR1245825}, but Thomsen showed in \cite{MR2586354} that the theorem is false without this hypothesis.

In order to apply Theorem \ref{fix} to the odometers of Example \ref{ce}, we need to compute the number of fixed points of $(0,1),(1,1)\in\Z\rtimes\Z_2$:

\begin{lemma}\label{fixed}
Let $\G\curvearrowright \varprojlim\G/\G_i$ as in Example \ref{ce}, and denote by $m_{(0,1)}$ and $m_{(1,1)}$ the number of fixed points of $(0,1)$ and $(1,1)$. Then
\begin{align*}
m_{(0,1)}&=\begin{cases}1 &\text{if $\frac{n_{i+1}}{n_i}$ is even for infinitely many $i$}\\
1 & \text{if $n_i$ is odd for every $i$}\\
2 & \text{otherwise,}\end{cases}\\
m_{(1,1)}&=\begin{cases}
1 & \text{if $n_i$ is odd for every $i$}\\
0 & \text{otherwise.}\end{cases}
\end{align*}

\end{lemma}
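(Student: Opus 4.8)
The plan is to make the profinite structure of $X$ explicit and reduce each fixed-point count to an inverse limit of fixed-point sets at finite levels. Since $\G/\G_i$ is identified with $\Z_{n_i}$ and the connecting maps $p_i\colon\Z_{n_{i+1}}\to\Z_{n_i}$ are reduction modulo $n_i$, we have $X\simeq\varprojlim(\Z_{n_i},p_i)$, and the actions described above read, on each coordinate, $(0,1)\colon x\mapsto -x$ and $(1,1)\colon x\mapsto 1-x$. A point $(x_i)\in X$ is fixed by $(0,1)$ (resp. $(1,1)$) exactly when each $x_i$ solves $2x\equiv 0$ (resp. $2x\equiv 1$) modulo $n_i$. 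Writing $F_i$ (resp. $F_i'$) for the set of such level-$i$ solutions, the fixed-point set of $(0,1)$ (resp. $(1,1)$) is $\varprojlim F_i$ (resp. $\varprojlim F_i'$), with connecting maps the restrictions of the $p_i$. Thus the whole problem becomes counting points of an inverse limit of finite sets.

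For $(0,1)$, the set $F_i=\{x\in\Z_{n_i}:2x=0\}$ is the $2$-torsion subgroup of $\Z_{n_i}$, so $|F_i|=2$ if $n_i$ is even (with nontrivial element $n_i/2$) and $|F_i|=1$ if $n_i$ is odd. The key computation is how $p_i$ restricts to $F_{i+1}$: writing $n_{i+1}=kn_i$, one checks $n_{i+1}/2\equiv n_i/2\pmod{n_i}$ when $k$ is odd and $n_{i+1}/2\equiv 0\pmod{n_i}$ when $k$ is even. Hence $p_i|_{F_{i+1}}$ is an isomorphism precisely when $n_i$ is even and $n_{i+1}/n_i$ is odd, and otherwise its image is $\{0\}$. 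I would then read off the three cases. If $n_{i+1}/n_i$ is even for infinitely many $i$, then $x_i=0$ is forced for arbitrarily large $i$, hence by compatibility for every $i$, giving a single fixed point. If every $n_i$ is odd, then every $F_i=\{0\}$ and again the limit is a point. In the remaining case some $n_i$ is even while $n_{i+1}/n_i$ is even only finitely often, so the restrictions $p_i|_{F_{i+1}}$ are eventually isomorphisms onto a two-element set, whence $\varprojlim F_i$ has exactly $2$ elements. This yields the stated values of $m_{(0,1)}$.

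For $(1,1)$, the congruence defining $F_i'=\{x\in\Z_{n_i}:2x\equiv 1\}$ has no solution when $n_i$ is even, so a single even $n_i$ already forces $\varprojlim F_i'=\varnothing$ and $m_{(1,1)}=0$. When every $n_i$ is odd, $2$ is invertible modulo $n_i$, so $F_i'=\{2^{-1}\bmod n_i\}$ is a singleton; since $p_i$ is a group homomorphism fixing $1$ it intertwines $x\mapsto 1-x$, and therefore carries the level-$(i+1)$ solution to the level-$i$ solution. The inverse limit of these singletons is a single point, giving $m_{(1,1)}=1$.

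The only delicate point is the inverse-limit bookkeeping for $(0,1)$, namely translating the parity of the ratios $n_{i+1}/n_i$ into the eventual behaviour of the connecting maps on the $2$-torsion; everything else is a routine congruence computation. Conceptually this count is nothing but the $2$-torsion of the profinite completion $\varprojlim\Z_{n_i}\simeq\prod_p\Z/p^{v_p}$, where $v_p:=\sup_i v_p(n_i)$ and $\Z/p^{\infty}$ is read as the $p$-adic integers: the three cases for $m_{(0,1)}$ correspond to $v_2=\infty$, $v_2=0$, and $0<v_2<\infty$, while $m_{(1,1)}$ is governed by the invertibility of $2$, i.e. by whether $v_2=0$. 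I would carry out the elementary level-by-level argument above, invoking this structural description only if it is needed to streamline the count.
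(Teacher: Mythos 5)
Your proof is correct and follows essentially the same route as the paper: both reduce the fixed-point count to the levelwise congruences $2x\equiv 0$ and $2x\equiv 1 \pmod{n_i}$, use the same parity computation of $n_{i+1}/2$ modulo $n_i$, and split into the same three cases. Your inverse-limit bookkeeping with the sets $F_i$ and the restricted connecting maps is just a slightly more systematic packaging of the paper's argument, which instead writes down the fixed points explicitly.
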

\begin{proof}

An element $(x_i)\in\varprojlim\Z_{n_i}$ is a fixed point of $(0,1)$ if and only if, for every $i$, $x_i=0$ or $x_i=\frac{n_i}{2}$. Likewise, $(x_i)$ is a fixed point of $(1,1)$ if and only if $x_i=\frac{n_i+1}{2}$ for every $i$.

Therefore, if $n_i$ is odd for every $i$, then the only fixed point of $(0,1)$ is $(0)$ and the only fixed point of $(1,1)$ is $(\frac{n_i+1}{2})$. 

If $\frac{n_{i+1}}{n_i}$ is even for infinitely many $i$, then $(1,1)$ admits no fixed points and the only fixed point of $(0,1)$ is $(0)$.

Finally, if there exists $i_0$ such that $n_{i_0}$ is even, but $\frac{n_{i+1}}{n_i}$ is odd for every $i\geq i_0$, and $i_0$ is minimal with these properties, then the only fixed points of $(0,1)$ are $(0)$ and $(0,\dots,0,\frac{n_{i_0}}{2},\frac{n_{i_0+1}}{2}\dots)$. Furthermore, $(1,1)$ does not admit fixed points.

\end{proof}

\begin{proposition}\label{kate}

Let $\G\curvearrowright X=\varprojlim\G/\G_i$ as in Example \ref{ce}. Then
\begin{align*}
K_0(C(X)\rtimes\G)\simeq\begin{cases}\{\frac{m}{n_i}:m\in\Z,i\geq 1\}\oplus\Z &\text{if $\frac{n_{i+1}}{n_i}$ is even for infinitely many $i$}\\
\{\frac{m}{n_i}:m\in\Z,i\geq 1\}\oplus\Z^2 & \text{otherwise.}\end{cases}
\end{align*}

\end{proposition}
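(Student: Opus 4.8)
The plan is to apply Theorem \ref{fix} to the odometer of Example \ref{ce}, so I first need to verify its hypotheses and then identify each factor in the stated decomposition. The restricted $\Z$-action on $X=\varprojlim\Z_{n_i}$ is the classical odometer given by adding $1$ in each coordinate, which is the prototypical minimal action, so that hypothesis is immediate. Lemma \ref{fixed} already records that $m_{(0,1)}+m_{(1,1)}$ is always strictly positive (in each case the total is $2$ when $n_i$ is odd for all $i$, and otherwise equals $m_{(0,1)}\in\{1,2\}$), so the fixed-point hypothesis of Theorem \ref{fix} is met in every case. Hence Theorem \ref{fix} yields
\begin{equation*}
K_0(C(X)\rtimes\G)\simeq(1+(0,1)_*)\left(\frac{C(X,\Z)}{(1-(1,0)_*)(C(X,\Z))}\right)\oplus\Z^{m_{(0,1)}+m_{(1,1)}}.
\end{equation*}
From Lemma \ref{fixed}, the exponent $m_{(0,1)}+m_{(1,1)}$ equals $1$ precisely when $\tfrac{n_{i+1}}{n_i}$ is even for infinitely many $i$, and equals $2$ in every remaining case (both when $n_i$ is odd for all $i$, giving $1+1$, and in the mixed case, giving $2+0$). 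This already matches the $\Z$ versus $\Z^2$ summand in the two branches of the proposition, so the combinatorial bookkeeping of fixed points reduces cleanly.

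It remains to identify the first summand $(1+(0,1)_*)\bigl(C(X,\Z)/(1-(1,0)_*)C(X,\Z)\bigr)$ with $\{\tfrac{m}{n_i}:m\in\Z,\,i\geq 1\}$ in all cases. The denominator quotient is exactly $H_0$ of the restricted $\Z$-action: since $(1,0)_*$ implements the generator of the $\Z$-odometer, $C(X,\Z)/(1-(1,0)_*)C(X,\Z)\cong H_0(\Z,C(X,\Z))$. By the computation \eqref{homo0} applied to the $\Z$-odometer $\Z\curvearrowright\varprojlim\Z/n_i\Z$ (whose index sequence is $[\Z:n_i\Z]=n_i$), this group is $\{\tfrac{m}{n_i}:m\in\Z,\,i\geq 1\}$. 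So the task is to understand the effect of the idempotent-like operator $1+(0,1)_*$ on this group, where $(0,1)_*$ is the map on $H_0$ induced by the flip $x\mapsto -x$.

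The key step, and the one I expect to be the main obstacle, is showing that $(0,1)_*$ acts as the identity on $H_0(\Z,C(X,\Z))=\{\tfrac{m}{n_i}:m\in\Z,\,i\geq 1\}$, so that $1+(0,1)_*$ becomes multiplication by $2$ and therefore an injection onto an isomorphic copy of the same group. I would argue this at each finite level: on $C(\Z_{n_i},\Z)$ the flip $x\mapsto -x$ is a permutation of the finite set $\Z_{n_i}$, and after passing to $H_0$ the class of a characteristic function depends only on its total mass (normalized by $n_i$), which the flip preserves. Concretely, the flip sends the class of $\tfrac{1}{n_i}$ (the image of a point mass) to the class of another point mass, which is again $\tfrac{1}{n_i}$ in the identification $H_0(\Z,C(X,\Z))\cong\{\tfrac{m}{n_i}\}$; one must check these level-wise identifications are compatible with the connecting maps $q_i$ and with the transfer maps of Proposition \ref{transfer}, so that $(0,1)_*$ is genuinely the identity on the direct limit and not merely on each stage up to the colimit maps. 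Granting this, $1+(0,1)_*$ is multiplication by $2$, which is injective on the torsion-free group $\{\tfrac{m}{n_i}:m\in\Z,\,i\geq 1\}$ and has image an isomorphic copy, giving the first summand as claimed and completing the identification in both branches.
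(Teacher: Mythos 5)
Your proposal is correct and follows essentially the same route as the paper: apply Theorem \ref{fix} together with the fixed-point count of Lemma \ref{fixed}, identify $C(X,\Z)/(1-(1,0)_*)C(X,\Z)$ with $H_0(\Z,C(X,\Z))\simeq\{\frac{m}{n_i}:m\in\Z,i\geq 1\}$ via \eqref{homo0}, and show $(0,1)_*$ acts trivially on this quotient so that $1+(0,1)_*$ is multiplication by $2$, whose image is an isomorphic copy of the torsion-free group. The paper verifies the triviality of $(0,1)_*$ directly on characteristic functions of cylinder sets (using $(0,1)_*(1_{U(j,k)})=1_{U(j,-k)}$ and $((1,0)_*)^k(1_{U(j,0)})=1_{U(j,k)}$) rather than level-by-level in the direct limit, but this is the same argument in substance.
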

\begin{proof}

In order to apply Theorem \ref{fix}, let us first compute $\frac{C(X,\Z)}{(1-(1,0)_*)(C(X,\Z))}$.

Consider the odometer $\Z\curvearrowright X= \varprojlim\Z_{n_i}$. Notice that $\frac{C(X,\Z)}{(1-(1,0)_*)(C(X,\Z))}\simeq H_0(\Z,C(X,\Z))$. Therefore, by \eqref{homo0}, $$\frac{C(X,\Z)}{(1-(1,0)_*)(C(X,\Z))}\simeq\left\{\frac{m}{n_i}:m\in\Z,i\geq 1\right\}.$$

Given $j\geq 1$ and $k\in\Z_{n_j}$, let $U(j,k):=\{(x_i)\in X:x_j=k\}$. Observe that $((1,0)_*)^k(1_{U(j,0)})=1_{U(j,k)}$, and  $(0,1)_*(1_{U(j,k)})=1_{U(j,-k)}$. From these facts, it follows that $(0,1)_*$ acts trivially on $\frac{C(X,\Z)}{(1-(1,0)_*)(C(X,\Z))}$.

The result now is a consequence of Theorem \ref{fix} and Lemma \ref{fixed}.

\end{proof}

The following lemma will be useful for computing the homology of the $(\Z\rtimes\Z_2)$-odometers that we are investigating in this section.

\begin{lemma}\label{techi}
Let $\sigma$ be an involutive homeomorphism on a compact, totally disconnected, Hausdorff space $Y$ such that $F_\sigma:=\{y\in Y:\sigma(y)=y\}$ is finite. Then, for $k\geq 0$, $H_{2k+1}(\Z_2,C(Y,\Z))=(\Z_2)^{F_\sigma}$.
\end{lemma}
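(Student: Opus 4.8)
The plan is to compute the group homology $H_*(\Z_2, C(Y,\Z))$ directly from the standard periodic free resolution of $\Z$ over $\Z[\Z_2]$. Writing $\Z_2 = \{1, t\}$ with $t$ the nontrivial element, the group ring is $\Z[\Z_2] = \Z[t]/(t^2-1)$, and the classical $2$-periodic resolution has the form
\begin{equation*}
\cdots \xrightarrow{N} \Z[\Z_2] \xrightarrow{t-1} \Z[\Z_2] \xrightarrow{N} \Z[\Z_2] \xrightarrow{t-1} \Z[\Z_2] \to \Z \to 0,
\end{equation*}
where $N = 1+t$ is the norm element. Tensoring over $\Z[\Z_2]$ with the coefficient module $M := C(Y,\Z)$ and taking coinvariants yields the chain complex whose homology computes $H_*(\Z_2, M)$; concretely, the maps alternate between $(\sigma_* - 1)$ and $(\sigma_* + 1)$ acting on $M$, where $\sigma_*$ is the action of $t$ induced by $\sigma$. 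Thus the odd homology groups $H_{2k+1}(\Z_2, M)$ are all isomorphic to $\ker(\sigma_* - 1)/\mathrm{Im}(\sigma_* + 1)$, and the even ones (for $k\geq 1$) to $\ker(\sigma_* + 1)/\mathrm{Im}(\sigma_* - 1)$. So it suffices to identify the single group $\ker(\sigma_* - 1)/\mathrm{Im}(\sigma_* + 1)$ and show it equals $(\Z_2)^{F_\sigma}$.

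The key computation is therefore to understand $\ker(\sigma_* - 1)$ and $\mathrm{Im}(\sigma_* + 1)$ inside $M = C(Y,\Z)$. The kernel $\ker(\sigma_*-1)$ consists of the $\sigma$-invariant integer-valued continuous functions on $Y$. My approach is to decompose $Y$ using the finiteness of $F_\sigma$. Since $Y$ is compact, totally disconnected, Hausdorff and $F_\sigma$ is finite, I would first separate the fixed points: choose a clopen $\sigma$-invariant neighborhood basis so that one can write $Y = F_\sigma \sqcup (Y \setminus F_\sigma)$ with $Y\setminus F_\sigma$ a $\sigma$-invariant clopen set on which $\sigma$ acts \emph{freely}. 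On the free part, $\sigma$ pairs up points, and I expect the relevant quotient to vanish: a $\sigma$-invariant function supported on a free clopen piece can be halved into a fundamental domain and expressed, up to the relation, as something in $\mathrm{Im}(\sigma_*+1)$, so the free part contributes nothing to $\ker(\sigma_*-1)/\mathrm{Im}(\sigma_*+1)$. Each isolated fixed point $y \in F_\sigma$, on the other hand, contributes a copy of $\Z$ to $\ker(\sigma_*-1)$ via the characteristic function $1_{\{y\}}$, while the image $\mathrm{Im}(\sigma_*+1)$ meets this line in $2\Z \cdot 1_{\{y\}}$ (since $(\sigma_*+1)(1_{\{y\}}) = 2\cdot 1_{\{y\}}$ at a fixed point). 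This produces exactly one factor $\Z/2\Z$ per fixed point, giving $(\Z_2)^{F_\sigma}$.

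The main obstacle I anticipate is making the free-part argument fully rigorous in the topological setting: I need to verify that near the fixed points the action can be linearized into clopen $\sigma$-orbits, and that an arbitrary $\sigma$-invariant $f \in \ker(\sigma_*-1)$ decomposes into a part concentrated at the (finitely many) fixed points plus a part lying in $\mathrm{Im}(\sigma_*+1)$. The technical lemma I would establish is: if $K \subset Y$ is a $\sigma$-invariant clopen set on which $\sigma$ acts freely, then $(\sigma_*+1)\colon C(K,\Z) \to C(K,\Z)^{\sigma}$ is surjective onto the invariants. This follows by picking a clopen fundamental domain $D \subset K$ (so $K = D \sqcup \sigma(D)$, possible because free continuous $\Z_2$-actions on totally disconnected compact spaces admit clopen fundamental domains), and noting that for $\sigma$-invariant $g$ one has $g = (\sigma_*+1)(g\cdot 1_D)$. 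Once this surjectivity is in hand, the only surviving contribution to the quotient comes from the fixed points, and the identification with $(\Z_2)^{F_\sigma}$ is immediate. I would close by remarking that $H_{2k+1}$ is independent of $k \geq 0$ by the $2$-periodicity of the resolution, which is what the statement asserts.
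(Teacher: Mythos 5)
Your reduction via the $2$-periodic resolution to the single group $\ker(\sigma_*-1)/\mathrm{Im}(\sigma_*+1)$ is exactly right (the paper does the same, citing Weibel's Theorem 6.2.2), and your fundamental-domain lemma for free parts is true and useful. The genuine gap is in the treatment of the fixed points: you claim that $Y=F_\sigma\sqcup(Y\setminus F_\sigma)$ with $Y\setminus F_\sigma$ a $\sigma$-invariant \emph{clopen} set, and you build the fixed-point contribution out of the characteristic functions $1_{\{y\}}$. Both steps require each $y\in F_\sigma$ to be an isolated point of $Y$, i.e.\ $F_\sigma$ to be open; but finiteness of $F_\sigma$ only gives that it is closed. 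In the situation where the lemma is actually applied in the paper, $Y$ is a Cantor set (an odometer space), which has no isolated points at all: there $Y\setminus F_\sigma$ is open but not compact, $1_{\{y\}}$ is not continuous, and your decomposition is impossible precisely in the case of interest. The obstacle you flag yourself (``a part concentrated at the finitely many fixed points'') cannot be resolved in the form you state it, because there is no continuous function concentrated at a non-isolated point.

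The repair is to replace $\{y\}$ by a clopen $\sigma$-invariant neighborhood: for each $y\in F_\sigma$ choose a clopen $U\ni y$ with $U\cap F_\sigma=\{y\}$ (the $U$'s pairwise disjoint) and set $A_y:=U\cap\sigma(U)$, which is clopen, invariant, and meets $F_\sigma$ only in $y$. Then package everything through the evaluation map $E\colon \ker(\sigma_*-1)/\mathrm{Im}(\sigma_*+1)\to(\Z_2)^{F_\sigma}$, $[f]\mapsto(f(y)\bmod 2)_{y\in F_\sigma}$, which is what the paper does: it is well defined since $(g+g\circ\sigma)(y)=2g(y)$ at a fixed point, surjective since $E([1_{A_y}])=\delta_y$, and injective because an invariant $f$ with even values on $F_\sigma$ can be corrected by $\sum_{y}\tfrac{f(y)}{2}\,(1+\sigma_*)(1_{A_y})=\sum_y f(y)1_{A_y}$ so as to vanish on $F_\sigma$; its support is then a clopen invariant set on which $\sigma$ acts freely, and your fundamental-domain lemma shows it lies in $\mathrm{Im}(\sigma_*+1)$. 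So your free-part lemma is the right tool and is essentially the paper's injectivity argument, but the fixed-point bookkeeping must go through invariant clopen neighborhoods and evaluation, not through a (generally nonexistent) clopen splitting of $Y$ along $F_\sigma$.
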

\begin{proof}

By \cite[Theorem 6.2.2]{MR1269324}, we have that $$H_{2k+1}(\Z_2,C(Y,\Z))=\frac{\{f\in C(Y,\Z):f\circ\sigma=f\}}{\{f+f\circ\sigma:f\in C(Y,\Z)\}}.$$ 

Let $E\colon\frac{\{f\in C(Y,\Z):f\circ\sigma=f\}}{\{f+f\circ\sigma:f\in C(Y,\Z)\}}\to(\Z_2)^{F_\sigma}$ be given by evaluation at the points of $F_\sigma$. Clearly, $E$ is a well-defined homomorphism, and we will show that it is bijective.

For each $y\in F_\sigma$, take $A_y\subset Y$ clopen set such that $A_y\cap F_\sigma=\{y\}$, and $\sigma(A_y)=A_y$. Note that $E([1_{A_y}])=\delta_y$, for every $y\in F_\sigma$. Hence, $E$ is surjective.

Let us now verify injectivity of $E$. Take $f\in C(Y,\Z)$ such that $f\circ\sigma=f$ and $E([f])=0$. We will show that $[f]=0$. Clearly, we can assume that $f|_{F_\sigma}=0$. Then $f$ can be written as a linear combination of functions of the form $1_B+1_{\sigma(B)}$, for certain $B\subset Y$ clopen sets such that $B\cap\sigma(B)=\emptyset$. This concludes the proof of the lemma.

\end{proof} 

\begin{theorem}
Let $\G\curvearrowright X=\varprojlim\G/\G_i$ as in Example \ref{ce}. Then, for $k\geq 1$, 
\begin{align*}
H_0(\G,C(X,\Z))&=\left\{\frac{m}{n_i}:m\in\Z,i\geq 1\right\}\\
H_{2k}(\G,C(X,\Z))&= 0,\\
H_{2k-1}(\G,C(X,\Z))&=\begin{cases}
\Z_2 & \text{if $\frac{n_{i+1}}{n_i}$ is even for infinitely many $i$}\\
(\Z_2)^2 & \text{otherwise.}\end{cases}
\end{align*}

\end{theorem}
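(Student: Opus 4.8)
The plan is to combine the transfer-map description of the homology of odometers (Proposition \ref{transfer}) with a Lyndon--Hochschild--Serre style decomposition of the homology of $\G=\Z\rtimes\Z_2$ acting on $C(X,\Z)$. The key structural fact is that $\G_i=n_i\Z\rtimes\Z_2$, and the subgroup $n_i\Z$ is normal in $\G_i$ with quotient $\Z_2$. So for each $i$ I would analyze $H_*(\G_i,\,\cdot\,)$ via the Hochschild--Serre spectral sequence for the extension $1\to n_i\Z\to\G_i\to\Z_2\to 1$. Because $n_i\Z\cong\Z$ has cohomological dimension $1$, the spectral sequence degenerates enough that $H_{2k-1}$ and $H_{2k}$ for $k\geq 1$ are governed by the $\Z_2$-action, and the relevant input is exactly the $\Z_2$-homology computed in Lemma \ref{techi}. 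The even-degree vanishing $H_{2k}=0$ for $k\geq 1$ should fall out because the only contributions would come from $H_p(\Z_2,\cdot)$ with $p$ odd paired against $H_1(\Z,\cdot)$, together with $H_{\mathrm{even}}(\Z_2,\cdot)$ paired against $H_0(\Z,\cdot)$, and these two families sit in opposite parities.

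Concretely, I would first record $H_0$: this is immediate from \eqref{homo0}, giving $\{m/n_i:m\in\Z,i\geq 1\}$, and nothing further is needed there. For $k\geq 1$, the main computation is to identify the odd homology groups. Writing $\sigma$ for the involution on $X$ induced by $(0,1)$, I expect the relevant odd-degree homology to reduce, after passing through the spectral sequence, to $H_{2k-1}(\Z_2,C(Y,\Z))$ for an appropriate involution, which by Lemma \ref{techi} equals $(\Z_2)^{F_\sigma}$ with $F_\sigma$ the fixed-point set. The cardinality of $F_\sigma$ is precisely the quantity $m_{(0,1)}$ already computed in Lemma \ref{fixed}: it is $1$ when $n_{i+1}/n_i$ is even infinitely often, and $2$ in the remaining "otherwise" case. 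This matches the two cases in the statement. I would therefore set up the identification carefully so that the fixed-point count of the $(0,1)$-involution on $X$ feeds directly into the exponent.

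A cleaner alternative, which I would probably prefer to carry out, is to take the direct limit picture from Proposition \ref{transfer} seriously: compute $H_*(\G_i)=H_*(n_i\Z\rtimes\Z_2)$ for each fixed $i$, then track the transfer maps $\mathrm{tr}_{\G_{i+1}}^{\G_i}$ across the telescope and pass to $\varinjlim$. Since $\G_i\cong\Z\rtimes\Z_2$ for every $i$, each term $H_{2k-1}(\G_i,\cdot)$ is a fixed finite group determined by the $\Z_2$-part, and the content lies in understanding what the connecting transfer homomorphisms do at the level of the $2$-torsion classes and at the $\Z_2$-summands corresponding to fixed points. The stabilized answer then depends on whether fixed points persist or get "folded away" in the limit, which is exactly what the parity condition on $n_{i+1}/n_i$ controls. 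This is the step I expect to be the main obstacle: verifying that the transfer maps are compatible with the identifications from Lemma \ref{techi} and that the colimit of the $(\Z_2)^{F_\sigma}$ groups collapses to the stated $\Z_2$ or $(\Z_2)^2$, rather than accidentally producing extra torsion or killing a summand. Once that bookkeeping is settled, the even-degree vanishing and the odd-degree values should follow directly by comparison with Lemma \ref{fixed}.
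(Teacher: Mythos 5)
Your approach breaks down at its central step: the identification of the odd-degree homology with $(\Z_2)^{m_{(0,1)}}$, the fixed-point count of the single involution $(0,1)$. You assert that Lemma \ref{fixed} gives $m_{(0,1)}=1$ when $n_{i+1}/n_i$ is even infinitely often and $m_{(0,1)}=2$ ``in the remaining otherwise case,'' but this misreads the lemma: when $n_i$ is odd for every $i$ (a case that belongs to the theorem's ``otherwise'' branch, since then no ratio $n_{i+1}/n_i$ is even), Lemma \ref{fixed} gives $m_{(0,1)}=1$, whereas the theorem asserts the answer $(\Z_2)^2$. The missing $\Z_2$ comes from the \emph{other} involution: the correct exponent is $m_{(0,1)}+m_{(1,1)}$, which is $1+1$ in the all-odd case, $2+0$ when some $n_{i_0}$ is even but the ratios are eventually odd, and $1+0$ when the ratios are even infinitely often. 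The paper makes both involutions appear structurally by writing $\G=\Z\rtimes\Z_2\simeq\Z_2*\Z_2$, the free product of the copies of $\Z_2$ generated by $(0,1)$ and $(1,1)$; the Mayer--Vietoris sequence for free products (\cite[Corollary 6.2.10]{MR1269324}) then yields $H_n(\G,C(X,\Z))\simeq H_n(\langle(0,1)\rangle,C(X,\Z))\oplus H_n(\langle(1,1)\rangle,C(X,\Z))$ for $n\geq 2$, and Lemmas \ref{techi} and \ref{fixed} finish the computation. Nothing in your Lyndon--Hochschild--Serre setup for $1\to n_i\Z\to\G_i\to\Z_2\to 1$ ever sees the fixed points of $(1,1)$, so your reduction cannot produce the stated answer.

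Moreover, even done correctly, the spectral sequence does not close the computation. With coefficients $M=C(X,\Z)$, the two rows of the $E^2$-page are $H_p(\Z_2,D)$, where $D=H_0(\Z,M)\simeq\{m/n_i:m\in\Z,\,i\geq1\}$ carries the trivial action, and $H_p(\Z_2,H_1(\Z,M))$, where $H_1(\Z,M)\simeq\Z$ (the constants, by minimality of the $\Z$-odometer) carries the \emph{sign} action, because conjugation by $(0,1)$ inverts the generator of $\Z$ --- a twist your parity heuristic silently requires, since with the trivial action $H_{2k-1}(\Z_2,\Z)=\Z_2$ would survive in total degree $2k$ and destroy the even-degree vanishing. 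The sequence degenerates at $E^3$, and for odd $n\geq 3$ you are left with an extension $0\to\Z_2\to H_n(\G,M)\to D/2D\to 0$; in the ``otherwise'' case $D/2D\simeq\Z_2$, and the spectral sequence cannot distinguish $\Z_4$ from $(\Z_2)^2$. Resolving this needs an extra input --- e.g., the transfer to the index-two subgroup $\Z$ composed with corestriction is multiplication by $2$, and $H_n(\Z,M)=0$ for $n\geq 2$, so $H_n(\G,M)$ is $2$-torsion --- or simply the free product decomposition. Finally, your ``cleaner alternative'' of tracking the transfer maps of Proposition \ref{transfer} is indeed the paper's method, but only in degree $1$, where the explicit formula \eqref{homo1} makes $\mathrm{tr}_{\G_{i+1}}^{\G_i}\colon(\Z_2)^2\to(\Z_2)^2$ computable; in degrees $2k+1\geq 3$ you offer no way to compute these maps, and you yourself flag this as the main unresolved obstacle.
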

\begin{proof}
As $\Z\rtimes\Z_2\simeq \Z_2 * \Z_2$, it follows from \cite[Corollary 6.2.10]{MR1269324} and Proposition \ref{transfer} that, for $k\geq 1$, $H_{2k}(\G,C(X,\Z))=0$. Moreover, $H_0(\G,C(X,\Z))=\{\frac{m}{n_i}:m\in\Z,i\geq 1\}$ (see \eqref{homo0}). 

Furthermore, from Lemmas \ref{fixed} and \ref{techi}, it follows that, for $k\geq 1$, $$H_{2k+1}(\G,C(X,\Z)) = (\Z_2)^r,$$ where $r=1$ if $\frac{n_{i+1}}{n_i}$ is even for infinitely many $i$, and $r=2$ otherwise.

Finally, we compute $H_1(\G,C(X,\Z))$.
For every $i$, $(\G_i)_\mathrm{ab}\simeq\Z_2\times\Z_2$ with canonical generators $(n_i,0)\G_i'$ and $(0,1)\G_i'$. Under these identifications, and using \eqref{homo1}, it is easy to see that $\mathrm{tr}_{\G_{i+1}}^{\G_i}\colon\Z_2\times\Z_2\to\Z_2\times\Z_2$ is given by $\mathrm{tr}_{\G_{i+1}}^{\G_i}(1,0)=(1,0)$ and 

\begin{align*}
\mathrm{tr}_{\G_{i+1}}^{\G_i}(0,1)=\begin{cases}(1,0) &\text{if $\frac{n_{i+1}}{n_i}$ is even}\\
(0,1) & \text{otherwise.}\end{cases}
\end{align*}

 It follows from Proposition \ref{transfer} that $H_1(\G,C(X,\Z))=(\Z_2)^r$, where $r=1$ if $\frac{n_{i+1}}{n_i}$ is even for infinitely many $i$, and $r=2$ otherwise. 

\end{proof}

\section{On the AH conjecture for odometers}\label{ah}

Let $G$ be a second countable, ample, essentially principal and minimal groupoid with unit space homeomorphic to the Cantor set. Matui conjectured in \cite{MR3552533} (\emph{AH conjecture}) that there is an exact sequence

\begin{align*}
H_0(G)\otimes\Z_2\xrightarrow{j}[[G]]_{\mathrm{ab}}\xrightarrow{I}H_1(G)\rightarrow 0.
\end{align*}
The map $I\colon [[G]]_\mathrm{ab}\to H_1(G)$ is given by $I(U[[G]]'):=[1_U]$, for $U\in[[G]]$ (this is defined for any ample groupoid with compact unit space).

 Let us describe $j\colon H_0(G)\otimes\Z_2\to[[G]]_{\mathrm{ab}}$, for $G$ an ample groupoid with compact unit space such that the orbit of each point of $G^{(0)}$ has at least three points. Given $F\subset G$ compact-open bisection such that $s(F)\cap r(F)=\emptyset$, notice that $\tau_F:=F\cup F^{-1}\cup(G^{(0)}\setminus(s(F)\cup r(F)))\in[[G]]$. The map $j$ is the unique homomorphism such that $j([1_{s(F)}]\otimes 1)=\tau_F[[G]]'$ for each such $F$. See \cite[Theorem 7.2]{nekrashevych2017simple} for a proof that $j$ is well-defined. It is easy to see that $I\circ j = 0$.

Given a set $X$, we denote by $S_X$ the group of bijections on $X$.

The proof of the following lemma uses a technique employed in \cite[Proposition 2.1]{MR3103094} and \cite[Proposition 4.6]{MR3556453}.

\begin{lemma}\label{fini}

Let $\Lambda$ be a finite index subgroup of $\G$ and $G$ the transformation groupoid associated with $\G\curvearrowright\G/\Lambda$. Then $[[G]]\simeq\Lambda^{\G/\Lambda}\rtimes S_{\G/\Lambda}$ and $[[G]]_{\mathrm{ab}}\simeq \Lambda_{\mathrm{ab}}\times (S_{\G/\Lambda})_{\mathrm{ab}}$.

\end{lemma}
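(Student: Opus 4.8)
The plan is to use the concrete description of the topological full group from Example \ref{tg}, exploiting that here the unit space $X=\G/\Lambda$ is a \emph{finite} discrete set, say with $n=[\G:\Lambda]$ points. Fix coset representatives $g_1,\dots,g_n$ of $\G/\Lambda$. Since every subset of $X$ is clopen, and any $U\in[[G]]$ satisfies $s(U)=r(U)=X$ with $s|_U,r|_U$ injective, each $U$ has the form $U=\{(\gamma_i,g_i\Lambda):1\le i\le n\}$, where $\gamma_i\in\G$ is the unique group element attached to the source point $g_i\Lambda$, and $g_i\Lambda\mapsto\gamma_i g_i\Lambda$ is a permutation of $\G/\Lambda$. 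Writing $\pi:=\theta_U\in S_{\G/\Lambda}$ for this permutation (viewed on indices via $\gamma_i g_i\Lambda=g_{\pi(i)}\Lambda$) and $\lambda_i:=g_{\pi(i)}^{-1}\gamma_i g_i\in\Lambda$, I obtain a bijection $U\mapsto(\pi,(\lambda_i)_i)$ between $[[G]]$ and $S_{\G/\Lambda}\times\Lambda^{\G/\Lambda}$.

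Next I would identify the group law. Recalling that multiplication in $[[G]]$ is composition of bisections (so $\theta_{UV}=\theta_U\circ\theta_V$), a direct computation of the product of $U\leftrightarrow(\pi,(\lambda_i))$ and $V\leftrightarrow(\sigma,(\mu_i))$ yields the element with permutation $\pi\sigma$ and $\Lambda$-coordinates $i\mapsto\lambda_{\sigma(i)}\mu_i$. This is precisely the multiplication of the permutational wreath product: the coordinates $\{\pi=e\}$ form a normal subgroup isomorphic to $\Lambda^{\G/\Lambda}$ (it is $\ker\theta$, since $\gamma_i$ fixing $g_i\Lambda$ forces $\gamma_i\in g_i\Lambda g_i^{-1}\simeq\Lambda$), the assignment $\pi\mapsto(\pi,(e)_i)$ is a splitting homomorphism of $\theta$, and $\theta$ is onto because $\G$ acts transitively on $\G/\Lambda$, so every permutation is realized. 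Conjugation by the splitting permutes the $\Lambda$-factors in the standard way, giving $[[G]]\simeq\Lambda^{\G/\Lambda}\rtimes S_{\G/\Lambda}$.

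For the abelianization I would apply the general fact that for a semidirect product $N\rtimes H$ one has $(N\rtimes H)_{\mathrm{ab}}\simeq (N_{\mathrm{ab}})_H\times H_{\mathrm{ab}}$, where $(N_{\mathrm{ab}})_H$ denotes the group of $H$-coinvariants; this follows by observing that $[N\rtimes H,N\rtimes H]$ is generated by $[N,N]$, $[H,H]$ and the elements $n\,({}^h n)^{-1}$. With $N=\Lambda^{\G/\Lambda}$ and $H=S_{\G/\Lambda}$ we have $N_{\mathrm{ab}}=(\Lambda_{\mathrm{ab}})^{\G/\Lambda}$ as a permutation module, and, since $S_{\G/\Lambda}$ acts transitively on the index set, the sum map $(\lambda_i)\mapsto\sum_i\lambda_i$ identifies its coinvariants with $\Lambda_{\mathrm{ab}}$ (equivalently, by Shapiro's lemma applied to the corresponding induced module). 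Hence $[[G]]_{\mathrm{ab}}\simeq\Lambda_{\mathrm{ab}}\times(S_{\G/\Lambda})_{\mathrm{ab}}$.

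I expect the main obstacle to be the bookkeeping in the second step: because $\G$ is nonabelian, the group elements $\gamma_i$ interact with the fixed coset representatives in a way that must be tracked carefully in order to see that the induced law is exactly the \emph{untwisted} wreath product and that the proposed splitting really is a homomorphism. The degenerate case $n=1$ (where $\Lambda=\G$ and $S_{\G/\Lambda}$ is trivial, so $[[G]]\simeq\G=\Lambda$) should be recorded separately, but it is immediate.
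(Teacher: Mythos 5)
Your proposal is correct and follows essentially the same route as the paper: both split the epimorphism $\theta\colon[[G]]\to S_{\G/\Lambda}$ using a fixed set of coset representatives (your splitting $\pi\mapsto\{(g_{\pi(i)}g_i^{-1},g_i\Lambda)\}$ is exactly the paper's $\eta$), identify $\ker\theta$ with $\Lambda^{\G/\Lambda}$ via conjugation by the representatives, and then pass to abelianizations. The only difference is one of detail: your explicit verification of the wreath-product multiplication law and your use of the coinvariants formula $(N\rtimes H)_{\mathrm{ab}}\simeq (N_{\mathrm{ab}})_H\times H_{\mathrm{ab}}$ (with the transitive $S_{\G/\Lambda}$-action collapsing $(\Lambda_{\mathrm{ab}})^{\G/\Lambda}$ to $\Lambda_{\mathrm{ab}}$ via the sum map) supply precisely the verification that the paper leaves implicit when it asserts that the embeddings $\eta$ and $\zeta$ induce an isomorphism on abelianizations.
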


\begin{proof}
There is an epimorphism $\theta\colon[[G]]\to S_{\G/\Lambda}$ given by $\theta_U:=r\circ (s|_U)^{-1}$, for $U\in [[G]]$. Fix representatives $g_1,\dots,g_n$ for $\G/\Lambda$ and define $\eta\colon S_{\G/\Lambda}\to [[G]]$ by $\eta(\sigma):=\cup_{i=1}^n \{(g_{\sigma(i)}g_i^{-1},g_i\Lambda)\}$, for $\sigma\in S_{\G/\Lambda}$. Note that $\eta$ is a homomorphism which is a right inverse for $\theta$. In particular, $[[G]]\simeq\ker\theta\rtimes S_{\G/\Lambda}$. Clearly, $$\ker\theta\simeq g_1\Lambda g_1^{-1}\times\dots\times g_n\Lambda g_n^{-1}\simeq \Lambda^{\G/\Lambda}.$$

Therefore, there is an isomorphism from $\Lambda_\mathrm{ab}\times (S_{\G/\Lambda})_\mathrm{ab}$ into $[[G]]_\mathrm{ab}$ induced by the embeddings $\eta\colon S_{\G/\Lambda}\to [[G]]$, defined in the previous paragraph, and $\zeta\colon\Lambda\to[[G]]$ given by
$\zeta(\lambda):=\{(\lambda,\Lambda)\}\cup(\{e\}\times\{\Lambda\}^c)$, for $\lambda\in\Lambda$.
\end{proof}

\begin{lemma}\label{finito}

Let $\Lambda$ be a finite index subgroup of $\G$ such that $[\G:\Lambda]\geq 3$, and $G$ the transformation groupoid associated with $\G\curvearrowright\G/\Lambda$. Then the following sequence is split exact:

\begin{align*}
0\rightarrow H_0(G)\otimes\Z_2\xrightarrow{j}[[G]]_{\mathrm{ab}}\xrightarrow{I}H_1(G)\rightarrow 0.
\end{align*}

\end{lemma}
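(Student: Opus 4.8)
The plan is to make all three groups in the sequence completely explicit and then read off exactness and the splitting from these descriptions. Since $X=\G/\Lambda$ is finite, $C(X,\Z)$ is the permutation module $\Z^{\G/\Lambda}$, so Example \ref{tg} together with Shapiro's lemma gives $H_0(G)\simeq H_0(\Lambda)=\Z$ and $H_1(G)\simeq H_1(\Lambda)=\Lambda_{\mathrm{ab}}$; in particular $H_0(G)\otimes\Z_2\simeq\Z_2$, generated by $[1_{\{x\}}]\otimes 1$ for any $x\in\G/\Lambda$. On the other side, Lemma \ref{fini} gives $[[G]]_{\mathrm{ab}}\simeq\Lambda_{\mathrm{ab}}\times(S_{\G/\Lambda})_{\mathrm{ab}}$, and since $[\G:\Lambda]\geq 3$ the orbit $\G/\Lambda$ has at least three points (so $j$ is defined) and $(S_{\G/\Lambda})_{\mathrm{ab}}\simeq\Z_2$. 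Thus the sequence to be analyzed has the shape $0\to\Z_2\xrightarrow{j}\Lambda_{\mathrm{ab}}\times\Z_2\xrightarrow{I}\Lambda_{\mathrm{ab}}\to 0$, and it suffices to show that $j$ is the inclusion of the second factor and that $I$ restricts to an isomorphism on the first.

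Next I would record two chain-level facts in the groupoid complex, both obtained by evaluating $\delta_2$ on a single composable pair. First, for any $x\in G^{(0)}$ the unit $1_{\{(e,x)\}}$ is a boundary (apply $\delta_2$ to $((e,x),(e,x))$), so units vanish in $H_1(G)$. Second, for a compact-open bisection $F=\{(g,x)\}$ one gets $[1_{F^{-1}}]=-[1_F]$ (apply $\delta_2$ to $((g^{-1},gx),(g,x))$ and use the first fact). From these both outer maps can be computed on the generators coming from Lemma \ref{fini}. For a transposition $\tau$ interchanging $g_1\Lambda$ and $g_2\Lambda$, the bisection $\eta(\tau)$ equals $F\cup F^{-1}$ (with $F=\{(g_2g_1^{-1},g_1\Lambda)\}$) together with units, whence $I(\eta(\tau)[[G]]')=[1_F]+[1_{F^{-1}}]=0$; since $I\circ\eta$ factors through $(S_{\G/\Lambda})_{\mathrm{ab}}$, this shows $I$ annihilates the $\Z_2$-factor. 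The same $F$ satisfies $s(F)\cap r(F)=\emptyset$ and $\tau_F=\eta(\tau)$, so $j([1_{\{g_1\Lambda\}}]\otimes 1)=\eta(\tau)[[G]]'$ is precisely the generator of $(S_{\G/\Lambda})_{\mathrm{ab}}$; hence $j$ is injective with image exactly the second factor, which is the kernel of the projection to $\Lambda_{\mathrm{ab}}$.

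It remains to identify $I\circ\zeta$. Since $\zeta(\lambda)$ differs from the single arrow $\{(\lambda,\Lambda)\}$ only by units, we have $I(\zeta(\lambda)[[G]]')=[1_{\{(\lambda,\Lambda)\}}]$, and the relation established above shows $\lambda\mapsto[1_{\{(\lambda,\Lambda)\}}]$ is a homomorphism $\Lambda\to H_1(G)$, hence descends to $\Lambda_{\mathrm{ab}}$. The heart of the argument -- and the step I expect to be the main obstacle -- is to show this homomorphism is an isomorphism. For this I would pass to the bar description: the groupoid complex $C_c(G^{(\bullet)},\Z)$ is exactly the bar complex computing $H_*(\G,\Z^{\G/\Lambda})$, under which $1_{\{(\lambda,\Lambda)\}}$ corresponds to the $1$-cycle $\lambda\otimes\delta_\Lambda$. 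One then checks that $\lambda\mapsto[\lambda\otimes\delta_\Lambda]$ is a chain-level realization of the Shapiro isomorphism $H_1(\Lambda)\xrightarrow{\sim}H_1(\G,\Z^{\G/\Lambda})$ (using $\Z^{\G/\Lambda}=\mathrm{Ind}_\Lambda^\G\Z$ and $\varphi(1)=\delta_\Lambda$ as in the transfer discussion preceding Proposition \ref{transfer}), so that $I\circ\zeta$ induces an isomorphism $\Lambda_{\mathrm{ab}}\xrightarrow{\sim}H_1(G)$.

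With these computations in hand the conclusion is immediate. The map $I$ kills the $(S_{\G/\Lambda})_{\mathrm{ab}}$-factor and restricts to an isomorphism on the $\Lambda_{\mathrm{ab}}$-factor, so $I$ is surjective with $\ker I$ equal to the $(S_{\G/\Lambda})_{\mathrm{ab}}$-factor, which is $\Ima j$; thus the sequence is exact. Finally, $\zeta$ composed with the inverse of the isomorphism $I\circ\zeta\colon\Lambda_{\mathrm{ab}}\to H_1(G)$ provides a right inverse to $I$, so the sequence splits.
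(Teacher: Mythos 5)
Your proposal is correct and takes essentially the same route as the paper: identify $[[G]]_{\mathrm{ab}}\simeq\Lambda_{\mathrm{ab}}\times(S_{\G/\Lambda})_{\mathrm{ab}}$ via Lemma \ref{fini} and $H_1(G)\simeq\Lambda_{\mathrm{ab}}$, $H_0(G)\otimes\Z_2\simeq\Z_2$ via Shapiro's lemma, and then recognize $I$ as the projection onto $\Lambda_{\mathrm{ab}}$ and $j$ as the inclusion of the $\Z_2$-factor. The chain-level computations you carry out (boundaries of composable pairs, $I\circ\eta=0$ on transpositions, and the identification of $I\circ\zeta$ with the Shapiro isomorphism via the bar complex) are precisely the verifications the paper compresses into the sentence ``Under these identifications, $I$ is the canonical projection, and $j$ is the canonical inclusion.''
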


\begin{proof}

Identify $[[G]]_{\mathrm{ab}}$ with $\Lambda_{\mathrm{ab}}\times (S_{\G/\Lambda})_{\mathrm{ab}}$ as in the previous lemma. Note that $(S_{\G/\Lambda})_\mathrm{ab}\simeq \Z_2$ and, by Shapiro's lemma, $H_1(G)\simeq \Lambda_\mathrm{ab}$ and $H_0(G)\otimes\Z_2\simeq\Z_2$. Under these identifications, $I\colon\Lambda_\mathrm{ab}\times \Z_2\to\Lambda_\mathrm{ab}$ is the canonical projection, and $j\colon\Z_2\to\Lambda_\mathrm{ab}\times \Z_2$ is the canonical inclusion.

\end{proof}

\begin{theorem}\label{homodo}

Let $G$ be the transformation groupoid associated to an odometer $\G\curvearrowright\varprojlim\G/\G_i$. The following sequence is exact.

\begin{align}
0\rightarrow H_0(G)\otimes\Z_2\xrightarrow{j}[[G]]_{\mathrm{ab}}\xrightarrow{I}H_1(G)\rightarrow 0.\label{es}
\end{align}
\end{theorem}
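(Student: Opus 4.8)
The plan is to realize the entire sequence \eqref{es} as a filtered colimit, over the tower $(\G_i)$, of the split exact sequences of Lemma \ref{finito}, and then invoke exactness of filtered colimits of abelian groups. For each $i$ write $G_i$ for the (finite) transformation groupoid associated with $\G\curvearrowright\G/\G_i$. By Example \ref{tg} and Shapiro's lemma $H_*(G_i)\simeq H_*(\G,\Z^{\G/\G_i})\simeq H_*(\G_i)$, and Proposition \ref{transfer} identifies $H_*(G)=H_*(\G,C(X,\Z))$ with $\varinjlim(H_*(G_i),\mathrm{tr}_{\G_{i+1}}^{\G_i})$; since $\otimes\Z_2$ commutes with colimits this also gives $H_0(G)\otimes\Z_2=\varinjlim(H_0(G_i)\otimes\Z_2)$. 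Thus two of the three terms are already presented as colimits along the transfer maps, and the remaining task is to present $[[G]]_{\mathrm{ab}}$ compatibly and to check the connecting squares commute.

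First I would build the colimit description of the full group. Given $U\in[[G_i]]$, recorded as a tuple $(\gamma_x)_{x\in\G/\G_i}$ with $x\mapsto\gamma_x x$ a bijection of $\G/\G_i$ (Lemma \ref{fini}), set $\iota_i(U):=\bigcup_x\{\gamma_x\}\times U(i,x)\in[[G]]$; this is a bisection because $\gamma_x U(i,x)=U(i,\gamma_x x)$ and the $U(i,\gamma_x x)$ partition $X$. The map $\iota_i$ is an injective homomorphism, and refining each fiber $U(i,x)$ into the level-$(i+1)$ fibers lying over it produces a connecting homomorphism $\lambda_i\colon[[G_i]]\to[[G_{i+1}]]$ with $\iota_{i+1}\circ\lambda_i=\iota_i$. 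Surjectivity of $\varinjlim\iota_i$ onto $[[G]]$ is the one genuinely topological point: any $U\in[[G]]$ has a clopen partition that can be refined to level $i$ for $i$ large, and then $U$ lies in the image of $\iota_i$. Since abelianization commutes with colimits, $[[G]]_{\mathrm{ab}}=\varinjlim([[G_i]]_{\mathrm{ab}})$ along the maps $\lambda_{i,\mathrm{ab}}$.

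Next I would verify that $(j_i)$ and $(I_i)$ assemble into morphisms of direct systems, i.e. that $\lambda_{i,\mathrm{ab}}\circ j_i=j_{i+1}\circ(\mathrm{tr}\otimes\mathrm{id})$ and $I_{i+1}\circ\lambda_{i,\mathrm{ab}}=\mathrm{tr}\circ I_i$. For $I$ this is naturality of $U\mapsto[1_U]$: under $\lambda_i$ the underlying bisection is unchanged, while the chain-level map $\Z^{\G/\G_i}\to\Z^{\G/\G_{i+1}}$ pulling back indicator functions of cosets to indicators of their fibers is exactly the map inducing the transfer in the proof of Proposition \ref{transfer}. For $j$ it suffices to track the two factors of Lemma \ref{fini}: under the identifications of Lemma \ref{finito}, $j_i$ is the inclusion of $H_0(G_i)\otimes\Z_2\simeq\Z_2$ onto the sign factor $(S_{\G/\G_i})_{\mathrm{ab}}\simeq\Z_2$, and $\lambda_{i,\mathrm{ab}}$ sends a transposition of two level-$i$ fibers to the product of the $[\G_i:\G_{i+1}]$ transpositions of level-$(i+1)$ fibers lying above them, so on the sign factor it is multiplication by $[\G_i:\G_{i+1}]\bmod 2$, which is precisely $\mathrm{tr}\otimes\mathrm{id}$ on $H_0(G_i)\otimes\Z_2$. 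Hence the ladder commutes.

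Finally I would conclude. Discarding the finitely many $i$ with $[\G:\G_i]<3$ (the indices strictly increase, and colimits are unchanged by passing to a cofinal tail), Lemma \ref{finito} gives a commuting ladder of short exact sequences, and exactness of filtered colimits of abelian groups yields exactness of the colimit sequence $0\to\varinjlim(H_0(G_i)\otimes\Z_2)\to\varinjlim[[G_i]]_{\mathrm{ab}}\to\varinjlim H_1(G_i)\to0$. It remains to identify the colimit maps with $j$ and $I$ of \eqref{es}, which follows from the naturality of these constructions already used above (both $j$ and $I$ for $G$ are compatible with the inclusions $\iota_i$ and with the canonical maps to $\varinjlim H_*(G_i)$). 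I expect the main obstacle to be this compatibility step: pinning down the connecting maps $\lambda_i$ explicitly enough to see that $j$ and $I$ are natural along the tower, and in particular matching the sign computation for $\lambda_{i,\mathrm{ab}}$ with the transfer on $H_0\otimes\Z_2$. The colimit description of $[[G]]$ and the final exactness are then routine.
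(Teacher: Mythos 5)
Your proposal is correct and follows essentially the same route as the paper: the paper's proof likewise writes $H_*(G)=\varinjlim H_*(G_i)$ and $[[G]]_{\mathrm{ab}}=\varinjlim[[G_i]]_{\mathrm{ab}}$ for the finite quotient groupoids $G_i$ and then applies Lemma \ref{finito} together with exactness of direct limits. The only difference is that you spell out the connecting maps, the naturality of $j$ and $I$ along the tower, and the cofinality point for $[\G:\G_i]\geq 3$, all of which the paper leaves implicit.
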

\begin{proof}

Given $i\in\N$, let $G_i$ be the transformation groupoid associated to $\G\curvearrowright\G/\G_i$. The result follows from the fact that $H_*(G)=\varinjlim H_*(G_i)$, $[[G]]_\mathrm{ab}=\varinjlim[[G_i]]_\mathrm{ab}$ and Lemma \ref{finito}.

\end{proof}

Note that, given an odometer $\G\curvearrowright X=\varprojlim\G/\G_i$, it holds that 
\begin{equation*}
H_0(\G,C(X,\Z))\otimes\Z_2\simeq\begin{cases}
0 & \text{if $[\G_{i}:\G_{i+1}]$ is even for infinitely many $i$}\\
\Z_2 & \text{otherwise.}\end{cases}
\end{equation*}

We do not know whether there is an odometer for which the exact sequence \eqref{es} does not split.

\bibliographystyle{acm}
\bibliography{bibliografia}
\end{document}